\newtheorem{theorem}{Theorem}
\newtheorem{lemma}[theorem]{Lemma}
\newtheorem{proposition}[theorem]{Proposition}
\theoremstyle{definition}
\newcommand{\Pic}{{\rm Pic}}
\renewcommand{\O}{{\mathcal O}}
\renewcommand{\L}{{\mathcal L}}
\newcommand{\R}{{\mathbb R}}
\newcommand{\bbZ}{{\mathbb Z}}
\newcommand{\bbF}{{\mathbb F}}
\renewcommand{\L}{{\mathcal L}}
\newcommand{\Xx}{{\mathcal X}}
\newcommand{\Proj}{{\mathbb P}}
\def\geq{\geqslant}
\def\leq{\leqslant}
\begin{document}
 
\title[Examples of non--effective rays]{Examples of non--effective rays at the boundary of the Mori cone of blow--ups of the plane}

\author{Ciro Ciliberto}
\address{Dipartimento di Matematica, Universit\`a di Roma Tor Vergata, Via O. Raimondo
 00173 Roma, Italia}
\email{cilibert@axp.mat.uniroma2.it}

\author{Rick Miranda}
\address{Department of Mathematics, Colorado State University, Fort Collins (CO), 80523,USA}
\email{rick.miranda@colostate.edu}
 
 
\keywords{Linear systems, Mori cone, Nagata's conjecture, nef rays}
 
\maketitle

\centerline{We dedicate this paper to Giorgio Ottaviani on his 60th birthday}

\begin{abstract} In this paper we prove that no multiple of the linear system of plane curves of degree $d\geq 4$ with a point of multiplicity $d-m$ (with $2 \leq m \leq d$) and $m(2d-m)$ simple general points is effective.
\end{abstract}

\section*{Introduction} 

Alex Massarenti and Massimiliano Mella asked us the following question. Consider 13 general points $p_0,\ldots, p_{12}$ in the projective plane and consider the class of a quartic curve with a singular point at $p_0$ and passing through $p_1,\ldots, p_{12}$. Is it the case that no multiple of this class is effective? 

In trying to answer this question we  succeeded in proving a more general result. 
Namely, let $d$ be any integer and $p_0,\ldots, p_{m(2d-m)}$ general points in the plane with $m \leq d$. Consider the class (or system) $\xi_{d,m}$ of plane curves of degree $d$ with a point of multiplicity at least $d-m$ at $p_0$ and passing through $p_1,\ldots, p_{m(2d-m)}$. Then we have:

\begin{theorem}\label{thm:main} Fix $k \geq 1$.\\
\begin{inparaenum}
\item [(a)] For any $d \geq 4$ and any $m$ with $2 \leq m \leq d$, the class $k\xi_{d,m}$ is not effective.\\
\item [(b)] For any $d$, the system $\xi_{d,1}$ is a pencil of rational curves 
and $\xi_{d,0}$ is composed with the pencil of lines through the point $p_0$ 
and has dimension $d$. 
The multiple linear systems $k\xi_{d,1}$ are composed with the corresponding pencil 
and have dimension $k$. 
There is no member of these systems that contains an irreducible curve 
which is not a component of a member of this pencil. 
The same is true for the system $\xi_{2,2}$.\\
\item [(c)] For $d=3$, the systems $\xi_{3,3}$ and $\xi_{3,2}$ 
coincide with the system of cubics through $9$ general points, 
which consists of a unique cubic $C$. 
The systems $k\xi_{3,3}$ and $k\xi_{3,2}$ 
consist of the unique  divisor  $kC$.
\end{inparaenum}

\end{theorem}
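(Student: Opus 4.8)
The plan is to pass to the blow-up $X$ of $\mathbb{P}^2$ at $p_0,\dots,p_N$, $N=m(2d-m)$, with exceptional divisors $E_0,\dots,E_N$, so that $\xi_{d,m}$ has class $D=dH-(d-m)E_0-E_1-\cdots-E_N$; one checks immediately that $D^2=0$ and $D\cdot(-K_X)=2m-(m-1)(2d-m)$, so the statement concerns a nef class on the boundary of the Mori cone. Parts (b) and (c) are soft; the real content is (a), and the idea there is to project $\mathbb{P}^2$ from the point $p_0$ carrying the large multiplicity.

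\emph{Part (a).} Blow up only $p_0$ to get $\mathbb{F}_1$ with its ruling $f\colon\mathbb{F}_1\to\mathbb{P}^1$, exceptional section $E_0$ (with $E_0^2=-1$) and fibre class $F$; then $k\,\xi_{d,m}$ becomes the system of curves in $|kmE_0+kdF|$ with multiplicity $\ge k$ at the $N$ general points $p_1,\dots,p_N$. Pushing forward along $f$ (the relevant twisted ideal sheaf has $R^1f_*=0$, so no correction term appears) gives $h^0(X,kD)=h^0\big(\mathbb{P}^1,\mathcal{G}\big)$, where $\mathcal{G}$ is the rank $km+1$ bundle obtained from $\mathcal{G}_0:=f_*\mathcal{O}_{\mathbb{F}_1}(kmE_0+kdF)=\bigoplus_{i=0}^{km}\mathcal{O}_{\mathbb{P}^1}(kd-i)$ by one elementary modification per fat point, imposed fibre by fibre; a Riemann--Roch computation yields $\deg\mathcal{G}=-\tfrac12 k(m-1)(2d-m)$. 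A short concavity argument in $m$ shows $\deg\mathcal{G}\le-\operatorname{rk}\mathcal{G}$ for all $d\ge4$, $2\le m\le d$, $k\ge1$, with equality only for $(d,m,k)=(4,2,1)$; hence it suffices to prove that $\mathcal{G}$ is \emph{balanced} (a direct sum of line bundles whose degrees differ pairwise by at most $1$), for then every summand has negative degree and $h^0(\mathcal{G})=0$. This balancedness is the heart of the proof. For $k=1$ it is clear, the modifications being generic corank-one modifications, each lowering the top summand by $1$. For general $k$ I would argue by degeneration (the Horace method): specialising $m+1$ of the points onto a single fibre $F^{(j)}$ of $f$ makes the trace of $|kmE_0+kdF|$ on $F^{(j)}\cong\mathbb{P}^1$ empty, so that fibre strips off without loss of sections; iterating strips $kd$ off the $F$-coefficient and lands, after finitely many steps, on a residual system on $\mathbb{F}_1$ that is visibly empty because $N=m(2d-m)$ is large enough relative to $kd(m+1)$, the few borderline cases being checked by hand (e.g.\ $(d,m)=(4,2)$, where one ends on $|H+E_0|$ with three general assigned points, which has no sections). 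Organising this bookkeeping uniformly in $(d,m,k)$ — in particular handling the inhomogeneous multiplicities that appear after a strip once $k\ge2$ (alternatively, proving the balancedness of $\mathcal{G}$ directly via the theory of generic modifications of bundles on $\mathbb{P}^1$) — is the main obstacle, and is where general position of $p_1,\dots,p_N$ is genuinely used.

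\emph{Part (b).} The same push-forward with $m=1$ produces $\mathcal{G}$ of rank $k+1$ and degree $0$, which when balanced equals $\mathcal{O}_{\mathbb{P}^1}^{\oplus(k+1)}$, so $\dim\xi_{d,1}=1$ and $\dim k\,\xi_{d,1}=k$; the balancedness is part of the analysis in (a). The system $\xi_{d,1}$ is not composed with a pencil of smaller degree (its dimension would then be too small), so by Bertini its general member is irreducible, hence a smooth rational curve; since $D^2=0$ this forces $|\xi_{d,1}|$ to be a base-point-free pencil, defining a fibration $\phi\colon X\to\mathbb{P}^1$ with $\mathcal{O}_X(\xi_{d,1})=\phi^*\mathcal{O}_{\mathbb{P}^1}(1)$. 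Then $\mathcal{O}_X(k\,\xi_{d,1})=\phi^*\mathcal{O}_{\mathbb{P}^1}(k)$, every effective member of $k\,\xi_{d,1}$ is the pull-back of a degree-$k$ divisor on $\mathbb{P}^1$, i.e.\ a sum of $k$ fibres, and the fibres of $\phi$ are exactly the members of $\xi_{d,1}$; this yields the assertions about irreducible components. The system $\xi_{d,0}$ is the $d$-th symmetric power of the pencil of lines through $p_0$, hence $\mathbb{P}\big(H^0(\mathbb{P}^1,\mathcal{O}(d))\big)$, of dimension $d$. Finally $\xi_{2,2}$ is the pencil of conics through four general points, and the same fibration argument (now a conic bundle) applies to it and its multiples.

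\emph{Part (c).} When $d=3$ one has $m(2d-m)=9$ for $m\in\{2,3\}$, and inspecting the definitions shows that $\xi_{3,3}$ and $\xi_{3,2}$ are both the linear system of plane cubics through $9$ general points, which consists of the unique (smooth, by generality) cubic $C$; on the relevant blow-up $C$ represents $-K$ (or a pull-back of $-K$) and $C^2=0$. Since the points are general, the degree-zero line bundle $\mathcal{O}_C(C)=N_{C/X}$ is non-torsion, so $h^0\big(\mathcal{O}_C(jC)\big)=0$ for every $j\ge1$; feeding this into the restriction sequences $0\to\mathcal{O}_X((j-1)C)\to\mathcal{O}_X(jC)\to\mathcal{O}_C(jC)\to0$ and inducting on $j$ gives $h^0\big(\mathcal{O}_X(kC)\big)=h^0(\mathcal{O}_X)=1$ for all $k\ge1$. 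Hence $|kC|=\{kC\}$, which is the claim.
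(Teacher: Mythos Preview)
Your reformulation in part~(a) via the projection from $p_0$ is correct: pushing forward along $f\colon\mathbb{F}_1\to\mathbb{P}^1$ does produce a rank-$(km+1)$ bundle $\mathcal{G}$ of degree $-\tfrac12 k(m-1)(2d-m)$, and $h^0(X,kD)=h^0(\mathbb{P}^1,\mathcal{G})$. But from that point on you have only restated the problem, not solved it. Asking that every summand of $\mathcal{G}$ have negative degree is \emph{equivalent} to $h^0(\mathcal{G})=0$, which is precisely the non-effectivity of $k\xi_{d,m}$ you set out to prove. The Horace sketch you give --- specialise $m+1$ points onto a fibre, strip, iterate --- is not carried through: after one strip the $m+1$ specialised points drop to multiplicity $k-1$ while the others stay at $k$, and the trace on that same fibre then has total order $(m+1)(k-1)$ against degree $km$, so you cannot strip again unless $k>m+1$; the bookkeeping you defer is the whole difficulty. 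You explicitly acknowledge this (``the main obstacle''), so as written part~(a) is a strategy, not a proof. It is also worth noting that the case $m=d$ of the statement is exactly Nagata's theorem for $d^2$ points, whose known proofs require a delicate specialisation (to a grid, in Nagata's original argument); a naive Horace induction is not expected to handle that case uniformly, which is further evidence that the gap is genuine.

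The paper's proof is entirely different. It degenerates $\mathbb{P}^2$ to a transverse union $P\cup F$ with $P\cong\mathbb{P}^2$ and $F\cong\mathbb{F}_1$, splits the imposed points between the two components, and analyses the possible limit linear systems twist by twist. The base case $m=2$ is the hard one: there one finds $(-1)$-curves splitting off with high multiplicity on one side, and the key new ingredient is a \emph{refined matching} analysis (Proposition~\ref{prop:refined}) showing that such splittings force, on the other component, towers of infinitely near multiple points along the double curve together with a correspondence condition among them. These extra conditions are exactly what kills the residual system (via a Nagata-style argument on a curve of degree $n+1$ with an $n$-fold point). The cases $m\geq 3$ then follow by an easy induction on $m$ using the naive matching and the already-proved $m=2$ case. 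None of this appears in your proposal.

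Your treatments of (b) and (c) are essentially fine and close in spirit to what the paper regards as trivial; the only caveat is that in (b) you again invoke balancedness of $\mathcal{G}$ for $m=1$, which you have not established independently --- though there the pencil structure can be seen directly without it.
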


A few remarks are in order. First statements (b) and (c) in the theorem are trivial (we stated them for completeness and we will inductively use them in the proof of (a) which is the core of the theorem).
Secondly, if $m=d$ or $m=d-1$ the statement is Nagata's theorem for $d^2$ general points in the plane (see \cite {N}), hence the theorem can be viewed as a generalization of Nagata's theorem.  So for the proof of (a) we may and will assume $2 \leq m \leq d-2$.

Let $X_n$ be the blow--up of the projective plane at $n$ general points. Let $\L_d(m_1,\ldots, m_n)$ be the linear system on $X_n$ corresponding to plane curves of degree $d$ with general points of multiplicities at least $m_1,\ldots, m_n$. 

If we blow up $p_0,\ldots, p_{m(2d-m)}$ we get the surface $X_{m(2d-m)+1}$ and $\xi_{d,m}$ can be interpreted as an element in $\Pic(X_{m(2d-m)+1})$; note that $\xi_{d,m}^2=0$. Moreover $\xi_{d,m}$ is nef. Indeed we consider a general plane curve $C$ of degree $d$ with a point $p_0$ of multiplicity $d-m$ and we can fix $m(2d-m)$ general points on $C$. If we blow up $p_0$ and the $m(2d-m)$ chosen points, the proper transform of $C$ is an irreducible curve with $0$ self--intersection, and therefore it is nef on the blow--up. Since nefness is an open condition, this is true for the general class $\xi_{d,m}$. 

Our result says that there is no positive number $k$ such that $\L_{kd}(k(d-m),k^{m(2d-m)})$ is non--empty (the exponential notation for repeated multiplicities is clear). If we set $N_1(X_{m(2d-m)+1})=\Pic(X_{m(2d-m)+1})\otimes _\bbZ \R$, then $\xi_{d,m}$ generates a rational ray in $N_1(X_{m(2d-m)+1})$ that is not effective (see \cite [\S 3.1] {CHMR}) and therefore it
sits in the boundary of the Mori cone of $X_{m(2d-m)+1}$. Such a ray, if rational in $N_1(X_{m(2d-m)+1})$, is called a \emph{good ray} in \cite [\S 3.2] {CHMR} whereas, if irrational, it is called a \emph{wonderful ray}. So far no wonderful ray has been discovered. However, proving that a given ray is good is in general not easy, and in \cite {CHMR} the authors were able to exhibit some examples. Therefore it is interesting to find good rays, and in this paper we make a new contribution in this direction.

Our proof uses the degeneration technique we introduced for analyzing the dimension of such linear system  (see, e.g., \cite {CM1}).  We briefly recall this in \S \ref {sec:deg}. The proof is by induction on $m$, the case $m=2$ being the critical one. We prove the $m=2$ case in \S \ref {sec:proof}. 
This particular example relies on a subtlety that requires us to analyze, 
more deeply than what we did in \cite {CM2}, 
the case in which there are multiple $(-1)$--curves splitting off a linear system in the limit. This we describe in \S \ref {sec:nthrow}. 
Finally in \S \ref {sec:proof} we finish the proof of Theorem \ref{thm:main}.

We notice that the surprising phenomenon that allows us to make the final analysis of the limit linear systems in the case $m=2$ is that we eventually end up with curves of a certain degree $tn$ in the plane with $n^2$ points of multiplicity $t$, which is currently the only case in which Nagata's conjecture is proven (see \cite {N}). Indeed we use here an argument inspired by the original one of Nagata (see l.c.) to deal with these cases. 

The ideas in this note can be generalized to prove more general similar results about general linear systems with zero self--intersection and we will do this in a forthcoming paper. 

\medskip

{\bf Acknowledgements:} Ciro Ciliberto is a member of GNSAGA of INdAM.

\section{The degeneration method}\label{sec:deg}

In this section we briefly recall the degeneration technique 
that we use to analyse planar linear systems (see \cite {CM1}). 
We want to study a linear system $\L_d(m_1,\ldots, m_n)$. 
To do this we consider a trivial family 
$\Proj^2\times \mathbb D\to \mathbb D$ over a disc $\mathbb D$. 
In the central fibre over $0\in \mathbb D$  we blow up a line $R$ 
producing a new family $\mathcal X\to \mathbb D$ 
with an exceptional divisor $F\cong \bbF_1$ 
and the proper transform $P\cong \Proj^2$ of the original central fibre. 
The new central fibre consists now of $F\cup P$, 
with $F,P$ transversely intersecting along the line $R$, 
which is the $(-1)$--curve in $F$. 

Next we fix $a$ general points on $P$ and $b$ general points of $F$, so that $a+b=n$. Consider sections of the family $\mathcal X\to \mathbb D$ 
extending these $n$ points to general points on the general fibre. 
Blowing up these sections, we have a degeneration of $X_n$ to 
the union of an $X_a$ (the blow--up of $P$ at the $a$ general points) 
and of an $X_{b+1}$ (the blow--up of $F$  at the $b$ general points). 

 Since there is an obvious map $\pi: \mathcal X\to \Proj^2$, 
we have the bundle $\O_\Xx(d)=\pi^*(\O_{\Proj^2}(d))$. 
This bundle restricts to the general fibre to $\O_{\Proj^2}(d)$. 
On the central fibre it restricts to the bundle $\O_{\Proj^2}(d)$ on $P$ and to $\O_F(df)$, where $f$ is the class of a fibre of the ruling of $F$ over $\Proj^1$.
This is a limit of the line bundle on the general fibre;
 there are other limits obtained by \emph{twisting} by  $\O_\Xx(-lP)$,
i.e., by tensoring the above limit bundle by $\O_\Xx(-lP)$, with $l$ an integer. 
This restricts to $\O_{\Proj^2}(d+l)$ on $P$
and to  $\O_F(df-lR)$\ on $F$. 
So we have a discrete set of limits of $\L_d(m_1,\ldots, m_n)$, 
depending on all choices for $a,b,l$ 
and distribution of the multiplicities among the $a+b$ points on the central fibre. 
 
A section of a limit line bundle is given by a pair of sections on $P$ and $F$, 
that restrict equally to the double curve $R$. 
We will call this the \emph{naive matching condition}. 
Such a section could be identically zero on one of the components of the central fibre, 
and in this case a matching section on the other component 
corresponds to a section of the linear system (called a \emph{kernel linear system}) 
of curves on the other component containing the double curve $R$.
One way to prove emptiness of the system on the general fibre 
is to find $a,b$, a distribution of the multiplicities and a twisting parameter $l$ 
such that there is no section of the limit line bundle on the central fibre 
that verifies the naive matching condition 
and that is non--zero on at least one of the two components.
 
An alternative approach to proving the emptiness of the linear system on the general fibre is the following. 
Suppose that  the system is non--empty on the general fibre. 
Then for every choice of $a,b$ and a distribution of the multiplicities, 
there will be a limit curve
 which must be the zero of a section of a limit line bundle 
given by some particular twist parameter $l$, 
this section being not identically zero on both $P$ and $F$.  
As we said, the naive matching condition means that the two curves restrict equally to $R$. However we will see in the next section that when the curves are non--reduced 
the matching conditions are more subtle. 
We will call these conditions \emph{refined matching conditions}.
Hence, to prove that the system on the general fibre is empty, it suffices to find $a,b$ and a distribution of the multiplicities so that for no $l$ there is a limit curve as above, i.e., a pair of curves on $P$ and $F$ satisfying the refined matching. This will be the approach we will use in the proof of the case $m=2$.

Clearly the former approach is easier than the latter;
however the latter approach could be necessary
if the naive approach fails for every twist, which can happen.

\section{Refined matching conditions}\label{sec:nthrow}

In this section we will perform an analysis, needed later, 
which is a generalization of the concepts of \emph{$1$--throws} and \emph{$2$--throws}  considered in \cite {CM2}. 

Suppose that a $(-1)$-curve $C$ lives on a component $P$ in a degeneration 
with two components $P$ and $F$ \
in the central fibre of a family $\mathcal X\to \mathbb D$, 
intersecting  transversely along a curve $R$,
and suppose we are given a line bundle $\mathcal L$ on $\mathcal X$.
Suppose that the intersection number of $C$ 
with the restriction of $\mathcal L$ to $P$ is $-s$. 
Suppose in addition that $C$ meets the double curve $R$ transversely at $m$ points.

For $m=1$ we have the $1$--throw considered in  \cite {CM2},
which reveals that the appropriate matching conditions for a curve on $F$ to be a limit 
is that it must have a point of multiplicity $s$ at the intersection point of $C$ with $R$, 
not simply  having intersection multiplicity $s$ with $R$ at that point.

Now suppose that $m > 1$.
We blow up $C$ in the threefold $\mathcal X$ $m$ times,
thus obtaining a new threefold $\mathcal X'$
and a new family $\mathcal X'\to \mathbb D$.
This blows up $F$ $m$ times at each of the $m$ intersection points of $C$ with $R$,
for a total of $m^2$ blow--ups.
We denote by $\bar F$ the resulting surface.

These blow--ups create $m$ ruled surfaces $Q_{m-1}, Q_{m-2},\ldots, Q_1, Q_0$ 
which are stacked one on the other. 
In the central fiber of $\mathcal X'$, 
$Q_i$ appears with multiplicity $m-i$, for $i=0,\ldots, m-1$.
One checks that $Q_i \cong \mathbb{F}_{i}$, 
with non--positive section $B_i$ and disjoint non--negative section $S_i$; 
on $Q_i$ we have $B_i^2=-i$, $S_i^2=i$, and $S_i \sim B_i + if$, 
where $f$ denotes as usual the fibre class and $\sim$ is the linear equivalence.
$Q_0$ meets the surface $P$ in a section $B_0$ (equal to $C$ on $P$), with $B_0^2=0$.
Each $Q_i$ meets $Q_{i+1}$ so that $S_i$ (on $Q_i$) is identified with $B_{i+1}$ (on $Q_{i+1}$).
Each $Q_i$ also meets the other component $\bar F$ in $m$ fibers of the ruling, corresponding to the $m$ points where $C$ meets $R$.

The normal bundle of $Q_0$ in $\mathcal X'$ is $(-1/m)(B_0+(m-1)S_0 +mf) = -B_0-f$.
For $1< i < m-1$, the normal bundle of $Q_i$ in $\mathcal X'$  is 
$(-1/(m-i)) ((m-i+1)B_i + (m-i-1) S_i + m f)
= -2B_i -(i+1)f$.
For $i=m-1$, the normal bundle of $Q_{m-1}$ in $\mathcal X'$ is $(-1)(2B_{m-1}+mf) = -2B_{m-1} -mf$.

When we pull back the bundle $\mathcal L$ to $\mathcal X'$,
this pull back $\mathcal L'$ restricts to $-sf$ on each $Q_i$.
At this point we make the additional assumption that $s$ is divisible by $m$: write $s = hm$. Twist $\mathcal L'$  by $\mathcal O_{\mathcal X'}(-h(\sum_{i=0}^{m-1} (m-i) Q_i))$.
Let us analyze the restriction of this new bundle on each component of the central fibre. 

First we consider the surface $P$, on which the original curve $C$ sits.  
Since the only exceptional surface that meets $P$ is $Q_0$,
we are twisting the restriction of the bundle on $P$ by $-hmQ_0 = -sQ_0$,
and since $Q_0$ restricts to $C$ on $P$ 
this removes $sC$ from the restriction of the bundle on $P$, 
and then this restriction is trivial on $C$.

The restriction to $Q_0$ is 
$$
-sf - hm{Q_0}|_{Q_0} - h(m-1) {Q_1}|_{Q_0} = -sf - s (-B_0-f) - h(m-1)S_0
= h B_0.
$$
For $1<i<m-1$, the restriction to $Q_i$ is 
\begin{align*}
-sf-h(m-i+1){Q_{i-1}}|_{Q_i} -h(m-i){Q_i}|_{Q_i} -h(m- i-1){Q_{i+1}}|_{Q_i}
&=\\
 -sf -h(m-i+1)B_i -h(m-i)(-2B_i-(i+1)f) -h(m-i-1)S_i&=0.
 \end{align*}
Finally for $i = m-1$ the restriction to $Q_{m-1}$ is 
$$
-sf -2h{Q_{m-2}}|_{Q_{m-1}} - h{Q_{m- 1}}|_{Q_{m-1}}=0.
$$

The above analysis shows that the bundle is now trivial on $Q_{m-1}, Q_{m-2},\ldots, Q_1$, and non--trivial only on $Q_0$, where it consists of $hB_0$, i.e., $h$ horizontal sections. Therefore the matching divisor on $\bar F$ does not meet any of the exceptional divisors of the first $m-1$ blow--ups, and meets only the last ones $h$ times at each of the $m$ points. Moreover, there is a \emph{correspondence} on the divisors on the final exceptional curves, namely they must all agree with $h$ horizontal sections. In other words, any one of these intersections determines all the other $m-1$ ones. This behaviour of the curves on $\bar F$ means that the curve on $F$ must have at each of the $m$ points of the intersection of $C$ and $R$, $m$ infinitely near points of multiplicity $h$ along $R$. We denote this phenomenon by $[h^m]_R$. Hence the matching conditions for the curves on $F$ can be written as $([h^m]_R)^m$, plus the correspondence
coming from agreement with the $h$ horizontal sections.

We can summarize what we proved in this section in the following statement:

\begin{proposition}\label{prop:refined} Suppose we have a semistable degeneration of surfaces $\mathcal X\to \mathbb D$ over a disc $\mathbb D$ and a line bundle $\L$ on $\mathcal X$ which restricts to line bundles on every component of the central fibre. Let $P$ be a component of the central fibre containing a $(-1)$--curve $C$ which is not a double curve and intersects the double curve $R$ transversally at $m$ points $p_1,\ldots, p_m$. Suppose that $C\cdot \mathcal L=-hm$, with $h>0$. Then any curve on the central fibre that is a limit of a curve in the general fibre in the linear system determined by the restriction of $\L$, must satisfy the following conditions:  for every point $p_i$ the curve on the component different from $P$ has the singularity of type $[h^m]_R$ at $p_i$ and the final $h$ infinitely near points to the $p_i$'s of order $m$ correspond in the sense described above. 
\end{proposition}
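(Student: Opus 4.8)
The plan is to obtain Proposition~\ref{prop:refined} as a direct consequence of the explicit computations carried out in the body of this section, so the ``proof'' is mostly a matter of organizing those computations into a clean logical sequence. First I would recall the setup: given the $(-1)$--curve $C$ on $P$ meeting the double curve $R$ transversally at $m$ points $p_1,\dots,p_m$, with $C\cdot\L|_P=-s$ and $s=hm$, we perform the sequence of $m$ blow--ups of $C$ inside the threefold $\mathcal X$, producing the modified semistable family $\mathcal X'\to\mathbb D$ and the tower of ruled surfaces $Q_{m-1},\dots,Q_0$ with the stated normal bundles, multiplicities $m-i$ in the central fibre, and incidences ($Q_0$ meets $P$ along $B_0=C$, $S_i$ on $Q_i$ is glued to $B_{i+1}$ on $Q_{i+1}$, and each $Q_i$ meets $\bar F$ in $m$ fibres of its ruling). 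This is exactly the geometry described above Proposition~\ref{prop:refined}, so I would simply cite it rather than re-derive it.

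Next I would perform the twist $\L'\mapsto \L'\bigl(-h\sum_{i=0}^{m-1}(m-i)Q_i\bigr)$ and record, component by component, the restriction of the twisted bundle, reproducing the three displayed calculations in the text: on $P$ the twist subtracts $sC$ and leaves a bundle trivial along $C$; on $Q_0$ the restriction is $hB_0$; and on each $Q_i$ with $1\le i\le m-1$ the restriction is trivial. Each of these is a short linear-algebra computation in the Picard group of the relevant $\mathbb F_i$ using the normal bundle formulas already established, together with the fact that $\L'$ pulls back to $-sf$ on every $Q_i$. The upshot, which I would state carefully, is that the matching divisor on $\bar F$ coming from a limit curve must be disjoint from the exceptional divisors of the first $m-1$ blow--ups over each $p_j$, and must meet the last exceptional divisor over $p_j$ exactly $h$ times (because $hB_0$ consists of $h$ horizontal sections), i.e.\ the curve on $F$ acquires, at each $p_j$, a chain of $m$ infinitely near points of multiplicity $h$ in the direction of $R$ — the singularity denoted $[h^m]_R$.

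Finally I would extract the \emph{correspondence} statement. The point is that $Q_0$ meets $\bar F$ in $m$ disjoint fibres $f_1,\dots,f_m$ (one over each $p_j$), while on $Q_0$ the twisted bundle is $hB_0$, and $B_0$ is a section, so a section of $hB_0$ is determined by its values along any one fibre — indeed $hB_0$ restricted to each fibre $f_j$ is the same $h$-tuple of points (the $h$ horizontal sections $B_0$ cut on $f_j$), and these agree across $j$ because $B_0$ is a single irreducible section of $Q_0\cong\mathbb F_0=\mathbb P^1\times\mathbb P^1$ meeting every fibre in the same way. Hence the $h$ infinitely near points of order $m$ that the limit curve on $F$ must have over $p_i$ are not independent: specifying them over one $p_i$ forces them over all the others. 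Assembling: any curve on the central fibre that is a limit of a general-fibre curve in $|\L|$ restricts on the component other than $P$ to a curve with a $[h^m]_R$ singularity at each $p_i$, with the final order-$m$ infinitely near points matched by this correspondence, which is precisely the assertion of the proposition.

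I expect the only genuinely delicate point to be the justification of the correspondence — i.e.\ explaining why the $h$ horizontal sections on $Q_0$ genuinely glue the data over the different $p_i$ rather than merely imposing $h$ conditions at each point independently. The normal-bundle and restriction computations are routine (and already displayed); the semistability and the existence of the limit curve are standard for this degeneration; but the statement ``any one of these intersections determines all the other $m-1$ ones'' rests on the irreducibility of $B_0$ as a horizontal section of the trivial $\mathbb P^1$-bundle $Q_0$ and on tracking how $B_0$ meets the $m$ fibres $Q_0\cap\bar F$, so that is where I would be most careful to spell out the argument.
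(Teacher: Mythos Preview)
Your proposal is correct and follows essentially the same route as the paper: the proposition is stated there as a summary of the preceding computations in \S\ref{sec:nthrow}, and your plan is precisely to organize those computations (the tower of $Q_i$'s, the twist by $-h\sum (m-i)Q_i$, and the restriction analysis) into a proof. Your treatment of the correspondence is slightly more explicit than the paper's; the only phrasing I would tighten is that the reason the $h$ points on one fibre determine those on the others is not that ``$B_0$ is a single irreducible section'' but that every member of $|hB_0|$ on $Q_0\cong\mathbb P^1\times\mathbb P^1$ is a pullback from the second factor, so its restriction to any fibre recovers the same degree-$h$ divisor.
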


We note that the correspondence condition in the above proposition
imposes an additional $h(m-1)$ conditions on the divisors on the surface meeting $P$ along $R$.

\section{The proof of the case $m=2$}\label{sec:proof2}

We focus in this section on the case $m=2$. We will consider the degeneration described in \S \ref {sec:deg} and first we want to describe the distribution of the multiplicities and the limit linear systems on $P$ and $F$. For convenience we set $n=d-1$ and note that there are $4n$ simple points in the case $m=2$, which we will distribute evenly among $P$ and $F$. So the limit linear systems will be
$$\L_P:=\L_{kn+t}(k(n-1),k^{2n}),\quad \L_F:=\L_{k(n+1)}(kn+t,k^{2n})$$
with $t$ the twisting parameter. 

In order to prove Theorem \ref {thm:main}(a), for $m=2$, we will use the refined matching approach. This requires that we prove that for any twisting parameter $t$ there is no limit curve satisfying the refined matching conditions stated in Proposition \ref {prop:refined}. 

Consider the curve class (useful on both $P$ and $F$) equal to $\L_n(n-1, 1^{2n})$. We note that this linear system is of dimension $0$ and consists of a unique $(-1)$--curve $C$. 

The linear system $\L_{kn}(k(n-1),k^{2n})$ is equal to $|kC|$, and has  dimension 0. Therefore if $t<0$, then $\L_P$ is empty. Hence we may assume $t\geq 0$. 

Let us analyze $\L_F$. The lines through the first point and through any one of the other $2n$ points split off with multiplicity $t$. The residual system has the form
$\L_{k(n+1)-2nt}(kn+t-2tn, (k-t)^{2n})$. Now we intersect this system with $C$ and get $-t(n-1)$. So $C$ splits   $t(n-1)$ times. The further residual system is  $\L_F'=\L_{(n+1)(k-tn)}(n(k-tn), (k-tn)^{2n})$. For $\L_F'$ to be effective, one needs $t\leq k/n$, which we will assume from now on. A sequence of $n$ quadratic transformations (each based at the first point and at two of the $2n$ points) brings $\L_F'$ to the complete linear system $\L_{k-tn}$. 

As for $\L_P$, one sees that $C$ splits off with multiplicity $k-tn$ and the residual system is $\L_P'=\L_{t(n^2+1)}(tn(n-1), (tn)^{2n})$. A sequence of $n$ quadratic transformations (each based at the first point and at two of the $2n$ points) brings this system to $\L_{t(n+1)}(t^{2n})$. 

Let us see what the refined matching implies on $\L_P'$ or rather on its Cremona transform $\L_{t(n+1)}(t^{2n})$. 

Each of the $2n$ lines splitting $t$ times  from $\L_F$ are $(-1)$--curves meeting the double curve $R$ once. Hence in the notation of the previous section $m=1$ and $s=h=t$ and therefore we are imposing $2n$ points of multiplicity $t$ to the linear system $\L_{t(n+1)}(t^{2n})$. These points are located along the curve $T$, the Cremona image of $R$ on $P$, which is easy to see to be equal to a curve of degree $n+1$, with a point of multiplicity $n$. 

Also the curve $C$ splits $t(n-1)$ times from $\L_F$ and meets the double curve $R$ transversely at $n-1$ points.  In the notation of the previous section $m=n-1$, $s=t(n-1)=tm$ hence $h=t$. Therefore we are imposing to $\L_{t(n+1)}(t^{2n})$ also the multiple points $([t^{n-1}]_T)^{n-1}$, plus the correspondence. 

Eventually the resulting system on $P$ is Cremona equivalent to the system $\L$ of plane curves of degree $t(n+1)$, with $(n+1)^2$ points of multiplicity $t$, plus the correspondence. These $(n+1)^2$ points are distributed in $2n$ general points, $2n$ general points on $T$, and $n-1$ general points of type $[t^{n-1}]_T$. 

Assume now $t>0$. We want to prove that $\L$ is empty and therefore $\L_P$ is empty. To prove this, we need the following:

\begin{lemma}\label{lem:unique} For any $t>0$ the linear system  of plane curves of degree $t(n+1)$ with $n-1$ general points of type $[t^{n-1}]_T$, with $2n$ general points of multiplicity $t$ on $T$ and $2n$ additional general points of multiplicity $t$ consists of  at most one element.
\end{lemma}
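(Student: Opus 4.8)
The plan is to reduce the uniqueness claim to a statement about a linear system with $(n+1)^2$ points of multiplicity $t$ on a curve of degree $t(n+1)$, which, after a Cremona transformation, is exactly the Nagata-type system $\L_{tn'}((t)^{n'^2})$ with $n'=n+1$, and then invoke the rigidity that is known in this (only) proven case of Nagata's conjecture. More precisely, I would first count: the total number of conditions imposed by the $n-1$ points of type $[t^{n-1}]_T$ is, by Proposition~\ref{prop:refined} and the accompanying remark, $(n-1)\cdot\binom{t+1}{2}\cdot(n-1)$ for the ordinary multiplicity part, and the virtual dimension of $\L$ works out so that $\L$ is expected to be empty for $t>0$; the content of the lemma is that even though $\L$ might be non-empty, it cannot contain two distinct members.

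The key step is the following: suppose $D_1$ and $D_2$ are two distinct curves in the system. Consider the pencil they generate. I would argue that a general member $D_\lambda = \lambda_1 D_1 + \lambda_2 D_2$ of this pencil, being a curve of degree $t(n+1)$ passing through all the prescribed (possibly infinitely near) points with the prescribed multiplicities, must be supported on the base locus in a very constrained way. The point is that the $(n+1)^2$ assigned points, after the sequence of quadratic transformations used above to simplify $\L_F'$ and $\L_P'$, become $(n+1)^2$ general points each of multiplicity $t$ in $\Proj^2$ with curves of degree $t(n+1)$ — this is precisely $\L_{t n'}(t^{n'^2})$ with $n'=n+1$. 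By Nagata's theorem (\cite{N}), the expected dimension of this system is $-1$ when $t\geq 1$ and $n'\geq 4$, hence it is empty; the only subtlety is that here $n'=n+1$ may be small, and more importantly the $n-1$ points are \emph{not} general but are of the special infinitely-near type $[t^{n-1}]_T$ lying on the proper transform of $T$, together with the correspondence condition.

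The main obstacle, therefore, is that I cannot simply quote Nagata to kill the system — I can only use it to kill the \emph{general-points} sub-system, and then I must leverage the speciality of the configuration plus the curve $T$. The idea I would pursue is: the curve $T$ (of degree $n+1$ with a point of multiplicity $n$, hence rational, with $T^2 = (n+1)^2 - n^2 = 2n+1$) carries $2n$ of the simple multiplicity-$t$ points and the $n-1$ clustered points; restrict a putative member $D$ of $\L$ to $T$ and bound $D\cdot T$ against the multiplicity of $D$ along $T$. If $T$ is not a component of $D$, then $D\cdot T \geq t\cdot 2n + t(n-1)\cdot(\text{contribution of the clusters})$, and I would check that this forces $D\cdot T > (t(n+1))(n+1) - t(n)\cdot n = \deg(D)\deg(T) - (\text{mult at the }n\text{-fold point})$, a contradiction unless $T\subseteq D$; then I would split off $T$ and induct on $t$ (or on the degree), the base case $t=1$ being handled directly. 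The uniqueness (rather than emptiness) then comes out because after removing the fixed part the residual system has the form $k'C$ or $k'T$ for a rigid $(-1)$- or rational curve, which is $0$-dimensional. I expect the bookkeeping of the infinitely-near multiplicities in the intersection $D\cdot T$, and correctly accounting for the correspondence condition (which, as the remark notes, imposes an extra $h(m-1)=t(n-2)$ conditions), to be the delicate part of the argument.
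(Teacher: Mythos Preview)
Your second approach --- restrict a putative member $D$ to $T$ and try to force $T\subseteq D$ --- is the right idea, but the inequality you are hoping for does not hold. In the Cremona--transformed system $\L_{t(n+1)}(t^{2n})$ there is no assigned multiplicity at the $n$--fold point of $T$ (so the ``$-t(n)\cdot n$'' term in your Bezout bound is spurious), and one simply has $D\cdot T = t(n+1)^2$. The points lying \emph{on} $T$ --- the $2n$ simple $t$--points and the $n-1$ clusters, each of the latter contributing $t(n-1)$ --- give only
\[
D\cdot T \;\geq\; 2nt + t(n-1)^2 \;=\; t(n^2+1),
\]
which falls short of $t(n+1)^2$ by exactly $2nt$, precisely the would--be contribution of the $2n$ multiplicity--$t$ points that are \emph{not} on $T$. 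So no contradiction arises and $T$ need not split.

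The missing step, which is what the paper does, is a specialization: move those $2n$ off--$T$ points onto $T$ as well (by semicontinuity it suffices to prove uniqueness for this special configuration). Now all $(n+1)^2$ multiplicity--$t$ points lie on the smooth locus of $T$, forming a divisor $D_0$ of degree $(n+1)^2$. Any member not containing $T$ would then cut $T$ exactly in $tD_0$, i.e.\ $tD_0\in|\O_T(t(n+1))|$. Since $T$ has arithmetic genus $\binom{n}{2}>0$, a \emph{general} divisor $D_0$ of this degree on the smooth locus satisfies $tD_0\notin|\O_T(t(n+1))|$ for every $t\geq 1$ (a countable union of proper closed conditions). Hence $T$ must split off; recursing on $t$ leaves $tT$ as the only possible member. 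This is indeed the mechanism behind Nagata's proof for a square number of points, but it is applied directly on $T$ --- your first--paragraph reduction to Nagata's theorem for $(n+1)^2$ \emph{general} points cannot be used, as you yourself observe, because after specialization the points are very far from general.

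One further point: the correspondence condition is not part of the hypotheses of the lemma. The lemma asserts only \emph{uniqueness}; the correspondence is invoked afterwards, outside the lemma, to eliminate the single surviving candidate.
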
  

\begin{proof} We specialize the configuration of the imposed multiple points to $n-1$ general points of type $[t^{n-1}]_T$, and  with $4n$ more general points of multiplicity $t$ on $T$. This is a total $(n+1)^2$ points (some of them are infinitely near) forming a divisor $D$ on $T$ supported on the smooth locus of $T$. By generality, for no positive integer $t$, $tD$ belongs to $|\O_T(t(n+1))|$. So any curve of degree $t(n+1)$ with the above multiple points on  $T$ must contain $T$. To the residual curve we may apply the same argument, so $T$ recursively splits; by induction we conclude that the only possible member of the system is $tT$.  This implies the assertion. 
\end{proof}

To prove that $\L$ is empty, we notice that the possible unique curve satisfying the multiplicity conditions imposed on $\L$ (see Lemma \ref {lem:unique}) will not satisfy the required correspondence as soon as $m=n-1\geq 2$, i.e., $n\geq 3$, hence $\L_P$ is empty.

Finally we have to deal with the case $t=0$. In this case we take $k=hn$ and  $\L_P$ consists of the unique curve  $hnC$. Now $C$ is a $(-1)$--curve that intersects $R$ transversely at $n$ points. Therefore, in the notation of \S \ref {sec:nthrow}, we have $m=n$, $s=k$. So the refined matching implies that we eventually have to impose to $\L_F$, or rather to its Cremona transform $\L_{hn}$, $n$ points of type $[h^n]_{T'}$, where $T'$ is  the Cremona image of the double curve $R$, plus the correspondence. Note that $T'$ is  a curve of degree $n$ with a point of multiplicity $n-1$. 

By the same argument as in Lemma \ref {lem:unique}, we see that the only curve verifying all the above conditions is $hT'$, so in the original analysis it is $hR$ plus some exceptional curves which appear in the Cremona transformation. However, as we saw in the refined matching analysis, on $P$ the bundle is now trivial, so the corresponding section on $P$ has to vanish identically on $P$. This shows that there is no limit curve in the case $t=0$ either. 

Eventually we have seen that for any twisting parameter $t$ there is no limiting curve verifying the refined matching conditions, finishing the proof of Theorem \ref {thm:main}(a) for $m=2$ and $d\geq 4$. 

\section{The proof for $m>2$}\label{sec:proof}

In this section we will complete the proof of Theorem \ref {thm:main}(a) in the case $m\geq 3$, arguing by induction on $m$ (the case $m=2$ for all $d\geq 4$  is the starting case of the induction). For this we will again use the degeneration as in \S \ref {sec:deg} and the naive matching approach will be sufficient.

Let us describe the limit linear systems we will use, i.e.,
$$L_P = \L_{k(d-2)}(k(d-m), k^{(m-2)(2d-m-2)}) = k\xi_{d-2,m-2},$$  
$$L_F = \L_{kd}(k(d-2), k^{4d-4}) = k\xi_{d,2}.$$

By the $m=2$ case, $L_F$ is empty, and therefore also the kernel system is empty. Hence it suffices to show that the kernel system on $P$  is also empty.

First consider the case $m=3$. Then, by Theorem \ref{thm:main}(b), $L_P$ is composed with a pencil of rational curves, and the kernel system is empty because it
consists of the members of $L_P$ that vanish along the double curve $R$, which is a general line on $P$. This proves the $m=3$ case for all $d\geq 5$ (remember that $m\leq d-2$).

Next assume $m\geq 4$, and therefore, since $m\leq d-2$, we have $d\geq 6$. Then, by induction, $L_P$ is empty and hence also the kernel system is empty, finishing the proof of Theorem \ref {thm:main}.


\begin{thebibliography}{}

\bibitem {CM1} C. Ciliberto and R. Miranda, \emph{Degenerations of planar linear systems}, J. Reine Ang. Math. {\bf 501} (1998), 191--220.

\bibitem {CM2} C. Ciliberto and R. Miranda, \emph{Homogeneous interpolation on ten points}, J. Algebraic Geom. {\bf 20} (2011), 685--726.

\bibitem {CHMR} C. Ciliberto, B. Harbourne, R. Miranda, J. Ro\'e, {\it Variations on Nagata's conjecture}, Clay Mathematics Proceedings {\bf 18} (2013), 185--203.

\bibitem  {N} M. Nagata, \emph{On the fourteenth problem of Hilbert}, Amer. J. Math. {\bf 81} (1959), 766--772.



 
\end{thebibliography}
\end{document}